\documentclass[11pt,reqno]{amsart}
\usepackage{amssymb,amscd,amsbsy}
\usepackage{amssymb,amscd,amsbsy,mathrsfs}
\newcommand{\lb}{\linebreak}

\newcommand{\s}{\sigma}

\newcommand{\f}{\varphi}

\newcommand{\D}{\Delta}
\renewcommand{\L}{\Lambda}

\newcommand{\F}{{\mathscr F}}

\newcommand{\X}{{\mathscr X}}
\newcommand{\Y}{{\mathscr Y}}

\newcommand{\R}{{\Bbb R}}
\newcommand{\Z}{{\Bbb Z}}

\newcommand{\0}{{\boldsymbol{0}}}

\newcommand{\bs}{\boldsymbol}

\newcommand{\bS}{{\boldsymbol S}}

\newcommand{\rf}[1]{(\ref{#1})}

\newcommand{\df}{\stackrel{\mathrm{def}}{=}}

\newcommand{\re}{\operatorname{Re}}

\newcommand{\supp}{\operatorname{supp}}

\newcommand{\const}{\operatorname{const}}

\newcommand{\eeq}{\end{equation}}
\newcommand{\beq}{\begin{equation}}
\newcommand{\bay}{\begin{eqnarray}}
\newcommand{\ba}{\begin{align*}}
\newcommand{\ea}{\end{align*}}
\newcommand{\ey}{\end{eqnarray}}
\newcommand{\bey}{\begin{eqnarray*}}
\newcommand{\eey}{\end{eqnarray*}}

\newcommand{\be}{\infty}

\newcommand{\bl}{\blacksquare}

\newcommand{\Pf}{{\bf Proof. }}
\newcommand{\im}{\operatorname{Im}}
\renewcommand{\re}{\operatorname{Re}}

\newtheorem{thm}{\hspace{\parindent}Theorem}[section]

\newtheorem{cor}[thm]{\hspace{\parindent}Corollary}
\newtheorem{lem}[thm]{\hspace{\parindent}Lemma}

\theoremstyle{remark}

\newtheorem*{rem*}{Remark}

\newcommand{\Dom}{{\rm Dom}}

\newcommand\fM{\frak M}

\newcommand{\rd}{{\rm d}}

\newcommand\ri{{\rm i}}

\begin{document}

\title{Functions of pairs of commuting self-adjoint operators under relatively bounded perturbations}
\author{A.B. Aleksandrov and V.V. Peller}

\maketitle

\begin{abstract}
We study the behaviour of functions of pairs of commuting self-adjoint operators under perturbations by relatively bounded operators. We obtain analogs of our earlier results for functions of a single self-adjoint operator under relatively bounded perturbations. The main tool is double operator integrals
\end{abstract}

\numberwithin{equation}{section}

\setcounter{section}{0}
\section{\bf Introduction}
\label{In}
\setcounter{equation}{0}

\

This paper can be considered as a sequel of our earlier papers \cite{Pe}, \cite{APPS} and \cite{AP3}. We study the behaviour of functions of normal operators (or, functions of pairs of commuting self-adjoint operators under perturbations.

Let $N$ and $M$ be (not necessarily bounded) normal operators with the same domain. We are going to consider $N$ as an initial operator and $M$ is a perturbed operator. The operator $M-N$ will be interpreted as the perturbation. Recall that $M-N$ is said to be a {\it relatively bounded perturbation of} $N$ if $\Dom(M)\subset\Dom(N)$ and
the following inequality hods
$$
\|(M-N)x\|\le\const\big(\|Nx\|+\|x\|),\quad x\in\Dom(M),
$$
where $\Dom(N)$ stands for the domain of $N$.

It turns out that it is more natural to consider the assumption that $\re M-\re N$ is a relatively bounded perturbation of $\re N$
and $\im M-\im N$ is a relatively bounded perturbation of $\im N$. It is easy to see that this assumption implies that $M-N$ is a relatively bounded perturbation of $N$. The converse, however, is false. Indeed, let $A$ be an unbounded self-adjoint operator. Put
$$
N=\ri A\quad\mbox{and}\quad M=A+\ri A.
$$
Clearly, $M-N$ is a relatively bounded perturbation of $N$. On the other hand, $\re M-\re N=A$ and $\re M-\re N=A$ and, clearly, 
$\re M-\re N$ is not a relatively bounded perturbation of $\re N=\0$.

Thus, we are going to consider the case when both $\re M-\re N$ is a relatively bounded perturbation of $\re N$ and $\im M-\im N$ is a relatively bounded perturbation of $\im N$. Passing to the real and imaginary parts, we arrive at the situation when we deal with an unperturbed pair $(A_1,A_2)$ and a perturbed pair $(B_1,B_2)$ of commuting self-adjoint operators. 
Note that the most natural way to define the notion of commuting self-adjoint operators is to say that {\it two self-adjoint operators commute if their spectral measures commute}.

Our assumption that $B_1-A_1$ is a relatively bounded perturbation of $A_1$ and $B_2-A_2$ is a relatively bounded perturbation of $A_2$.
It is well known (see e.g. \cite{AP3}) that this assumption is equivalent to the fact that
$(A_1,A_2)$ and $(B_1,B_2)$ are pairs of commuting self-adjoint operators such that
the operators
\bay
\label{otnogr}
(B_1-A_1)(A_1+\ri I)^{-1}\quad\mbox{and}\quad(B_2-A_2)(A_2+\ri I)^{-1}\quad\mbox{are bounded}.
\ey

The main results of the paper will be given in \S\;\ref{glavnoe}. Namely, we obtain in \S\;\ref{glavnoe} estimates of the deviations of functions of pairs of commuting self-adjoint operators under relatively bounded perturbation. Recall that in the case of functions of single self-adjoint operators similar results were obtained recently in \cite{AP3}.

As in earlier papers an important role throughout the paper will be played by double operator integrals.
In \S\;\ref{dvoinya} we give a brief introduction in double operator integrals and Schur multipliers.

Finally, in \S\;\ref{Besovy} we give a brief introduction in the Besov class $B_{\be,1}^1(\R^2)$ which plays an importantrole in perturbation theory.

\

\section{\bf Double operator integrals and Schur multipliers}
\label{dvoinya}
\setcounter{equation}{0}

\

In this section we give a brief introduction to double operator integrals and Schur multipliers. We refer the reader to
\cite{BS1}, \cite{BS2}, \cite{AP1}, \cite{Pe2} and \cite{Pe3} for more detailed information.

Double operator integrals are expressions of the form
\bay
\label{dvopi}
\iint_{\X\times\Y}\Phi(x,y)\,\rd E_1(x)Q\,\rd E_2(y).
\ey
Here $E_1$ and $E_2$ are spectral measures on Hilbert space, $\Phi$ is a bounded measurable function and 
$Q$ is a bounded linear operator on Hilbert space. Such double operator integrals appeared first in the paper \cite{DK} by Yu.L. Daletskii and S.G. Krein. Later M.Sh. Birman  and M.S. Solomyak in \cite{BS1} created a rigorous beautiful theory of double operator integrals. They started with the case when $Q$ is a Hilbert--Schmidt operator and defined the double operator integral in \rf{dvopi} as
$$
\left(\int_{\X\times\Y}\Phi\,\rd{\rm E}\right)Q,
$$
where ${\rm E}$ is the spectral measure on the Hilbert Schmidt class $\bS_2$, which is defined on the measurable rectangles by
$$
{\rm E}(\L\times\D)Q=E_1(\L)QE_2(\D),\quad Q\in\bS_2.
$$
It follows from the definition that in this case the double operator integral in \rf{dvopi} belongs to $\bS_2$ and
$$
\left\|\iint\Phi(x,y)\,\rd E_1(x)Q\,\rd E_2(y)\right\|_{\bS_2}\le\|\Phi\|_{L^\be}\|Q\|_{\bS_2}.
$$

A bounded measurable function $\Phi$ on $\X\times\Y$ is called a {\it Schur multiplier (of the trace class $\bS_1$) with respect to the spectral measures $E_1$ and $E_2$} if
 $$
 Q\in\bS_1\quad\Longrightarrow\quad\iint\Phi\,\rd E_1Q\,\rd E_2\in\bS_1.
 $$
 We denote the class of Schur multipliers of $\bS_1$ with respect to spectral measures $E_1$ and $E_2$ by $\fM_{E_1,E_2}$.
 
 If $\Phi\in\fM_{E_2,E_1}$, we can define by duality the double operator integral in \rf{dvopi} for arbitrary bounded linear operator $Q$ and the transformer
 $$
 Q\mapsto\iint\Phi\,\rd E_1Q\,\rd E_2
 $$
 becomes a bounded linear operator in the operator norm.

In this paper we consider the situation when the spectral measures $E_1$ and $E_2$ are defined on the Borel subsets of the Euclidean space $\R^2$. We denote by $\fM(\R^2\times\R^2)$ the class of {\it Borel Schur multipliers} on $\R^2\times\R^2$,
i.e. the class of of Borel functions on $\R^2\times\R^2$ that are Schur multipliers with respect to arbitrary spectral measures on the Borel subsets of the Euclidean space $\R^2$.

We proceed now to the definition of the Haagerup tensor product of subspaces of $L^\be$ spaces.

 Suppose that $E_1$ and $E_2$ are spectral measures. The {\it Haagerup tensor product} 
 \lb$L^\be_{E_{A_1}}\otimes_{\rm h}L^\be_{E_{A_2}}$ of $L^\be_{E_{A_1}}$ and $L^\be_{E_{A_2}}$ is, by definition, the class of functions $\Phi$
on $\R^2\times\R^2$ of the form
\bay
\label{PhiHaa}
\Phi(x,y)=\sum_{n\ge0}\f_n(x)\psi_n(y),\quad\mbox{for}\quad\f_n\in L^\be_{E_{A_1}}
\quad\mbox{and}\quad\psi_n\in L^\be_{E_{A_2}}
\ey
such that 
\bay
\label{uslHaa}
\sum_{n\ge0}|\f_n|^2\in L^\be_{E_{A_1}}\quad\mbox{and}\quad\sum_{n\ge0}|\psi_n|^2\in L^\be_{E_{A_2}}.
\ey
The norm of $\Phi$ in $L^\be_{E_{A_1}}\otimes_{\rm h}L^\be_{E_{A_2}}$ is the infimum of the expression
$$
\left\|\left(\sum_{n\ge0}|\f_n|^2\right)^{1/2}\right\|_{L^\be}\cdot\left\|\left(\sum_{n\ge0}|\psi_n|^2\right)^{1/2}\right\|_{L^\be}
$$
over all representations of $\Phi$ in the form of \rf{PhiHaa}.

There are various descriptions of the space $\fM_{E_1,E_2}$ of Schur multipliers, see \cite{Pe}, \cite{AP1} and \cite{AP2}. We mention the following one given in \cite{Pe}: $\Phi\in\fM_{E_1,E_2}$ if and only 
$\Phi\in L^\be_{E_1}\otimes_{\rm h}L^\be_{E_2}$. Note that the implication
$$
\Phi\in L^\be_{E_1}\otimes_{\rm h}L^\be_{E_2}\quad\Longrightarrow\quad\Phi\in\fM_{E_1,E_2}
$$
was obtained earlier in \cite{BS2}.

Note also that in \cite{AP2} it was shown that
$$
\|\Phi\|_{\fM_{E_1,E_2}}=\|\Phi\|_{L^\be_{E_1}\otimes_{\rm h}L^\be_{E_2}},\quad\Phi\in\fM_{E_1,E_2}.
$$

\

\section{\bf The Besov class $\bs{B_{\be,1}^1(\R^2)}$}
\label{Besovy}
\setcounter{equation}{0}

\

In this section we briefly introduce the Besov space $B_{\be,1}^1(\R^2)$. More detailed information about Besov spaces 
in more general situations can be found in \cite{N}, \cite{Pee}, \cite{Pe3} and \cite{AP1}.

Let $w$ be an infinitely differentiable function on  $\R$ such that
\bay
\label{w}
w\ge0,\quad\supp w\subset\left[1/2,2\right],\quad\mbox{and} \quad w(t)=1-w\left(\frac t2\right)\quad\mbox{for}\quad t\in[1,2].
\ey

We define the functions $W_n$, $n\in\Z$, on $\R^2$ by 
$$
\big(\F W_n\big)(x)=w\left(\frac{\|x\|}{2^n}\right),\quad n\in\Z, \quad x=(x_1,x_2),
\quad\|x\|\df\Big(x_1^2+x_2^2\Big)^{1/2},
$$
where $\F$ is {\it Fourier transform} defined on $L^1\big(\R^2\big)$ by
$$
\big(\F f\big)(t_1,t_2)=\!\int\limits_{\R^d} f(x)e^{-{\rm i}(x_1t_1+x_2t_2)}\,\rd x_1\,\rd x_2,\!\quad 
$$
Clearly,
$$
\sum_{n\in\Z}(\F W_n)(t)=1,\quad t\in\R^2\setminus\{0\}.
$$

With each tempered distribution $f$ in ${\mathscr S}^\prime\big(\R^2\big)$ we associate the sequence 
$\{f_n\}_{n\in\Z}$,
\bay
\label{fn}
f_n\df f*W_n.
\ey
The formal series $\sum_{n\in\Z}f_n$, being a Littlewood--Payley type expansion of $f$, does not necessarily converges to $f$. 
We start with the definition of the (homogeneous) Besov class $\dot B^1_{\be,1}\big(\R^2\big)$,
$s\in\R$, $0<p,\,q\le\be$, as the space of distributions
$f$ of class ${\mathscr S}^\prime(\R^2)$ (i.e., tempered distributions) such that
\bay
\label{Wn}
\{2^{n}\|f_n\|_{L^\be}\}_{n\in\Z}\in\ell^1(\Z),\quad
\|f\|_{B^1_{\be,1}}\df\big\|\{2^{n}\|f_n\|_{L^\be}\}_{n\in\Z}\big\|_{\ell^1(\Z)}.
\ey
In accordance with this definition, $\dot B^s_{p,q}(\R^d)$ contains all polynomials; next, $\|f\|_{B^1_{\be,1}}=0$ for an arbitrary polynomial $f$. Moreover, the distribution $f$ is determined by the sequence $\{f_n\}_{n\in\Z}$
uniquely up to a polynomial. It is easy to see that the series 
$\sum_{n\ge0}f_n$ converges in\footnote{The space 
${\mathscr S}^\prime(\R^2)$ is equipped with the weak topology
$\s\big({\mathscr S}^\prime(\R^2),{\mathscr S}(\R^2)\big)$.} ${\mathscr S}^\prime(\R^d)$. 
However, the series $\sum_{n<0}f_n$ can diverge in general. Nevertheless, it can be shown that the series
\bay
\label{ryad}
\sum_{n<0}\frac{\partial f_n}{\partial x_1}\quad\mbox{and}\quad\sum_{n<0}\frac{\partial f_n}{\partial x_2}
\ey
converge uniformly in $\R^2$.

Now we can define the modified (homogeneous) Besov space $B^s_{p,q}\big(\R^d\big)$. We say that a distribution $f$
belongs to the Besov class $B^s_{p,q}(\R^d)$  \rf{Wn} is satisfied and
$$
\frac{\partial f}{\partial x_1}=\sum_{n\in\Z}\frac{\partial f_n}{\partial x_1}\quad
\mbox{and}\quad 
\frac{\partial f}{\partial x_2}=\sum_{n\in\Z}\frac{\partial f_n}{\partial x_2}\quad
$$
in the space ${\mathscr S}^\prime\big(\R^2\big)$. Now the function $f$ is uniquely determined by the sequence $\{f_n\}_{n\in\Z}$ up to a constant. Moreover, a polynomial $g$ belongs to 
$B^1_{\be,1}\big(\R^2\big)$ if and only if it is a constant.

\

\section{\bf The main result}
\label{glavnoe}
\setcounter{equation}{0}

\

The key role in this paper will be played by the integral formula given in the following theorem.
We are going to use the notation
$$
x=(x_1,x_2)\in\R^2\quad\mbox{and}\quad y=(y_1,y_2)\in\R^2.
$$
Also, for pairs of commuting self-adjoint operators $(A_1,A_2)$ and $(B_1,B_2)$, we use the notation $E_A$ for the joint spectral measure of the pair $(A_1,A_2)$ and $E_B$ for the joint spectral measure of the pair $(B_1,B_2)$.

\begin{thm}
\label{osnfor}
Suppose that $(A_1,A_2)$ and $(B_1,B_2)$ are pairs of commuting self-adjoint operators satisfying {\em\rf{otnogr}}.
Suppose that $f$ is a continuous function on $\R^2$ such that the functions
$$
(x,y)\mapsto\frac{f(x_1,y_2)-f(x_1,x_2)}{y_2-x_2}(x_2+\ri),\quad(x,y)\in\R^2\times\R^2,
$$
and
$$
(x,y)\mapsto\frac{f(y_1,y_2)-f(x_1,y_2)}{y_1-x_1}(x_1+\ri),\quad(x,y)\in\R^2\times\R^2,
$$
belong to the space $\fM(\R^2\times\R^2)$ of Borel Schur multipliers.
\begin{align}
\label{intpre}
f&(B_1,B_2)-f(A_1,A_2)\nonumber\\[.2cm]
&=
\iint_{\R^2}\frac{f(x_1,y_2)-f(x_1,x_2)}{y_2-x_2}(x_2+\ri)
\,\rd E_B(y_1,y_2)(B_2-A_2)(A_2+\ri I)^{-1}\,\rd E_A(x_1,x_2)\nonumber\\[.2cm]
&+
\iint_{\R^2}\frac{f(y_1,y_2)-f(x_1,y_2)}{y_1-x_1}(x_1+\ri)
\,\rd E_B(y_1,y_2)(B_1-A_1)(A_1+\ri I)^{-1}\,\rd E_A(x_1,x_2)
\end{align}
\end{thm}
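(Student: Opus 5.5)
The plan is to verify \rf{intpre} after cutting both sides off by spectral projections of bounded Borel sets, and then remove the cut-offs by a limiting argument. The structural point is that the two integrands sum, after multiplication by the appropriate factors, to a telescoping expression. Put
$$
\Phi_2(x,y)=\frac{f(x_1,y_2)-f(x_1,x_2)}{y_2-x_2},\qquad \Phi_1(x,y)=\frac{f(y_1,y_2)-f(x_1,y_2)}{y_1-x_1}.
$$
Then
$$
\Phi_2(x,y)(y_2-x_2)+\Phi_1(x,y)(y_1-x_1)=f(y_1,y_2)-f(x_1,x_2).
$$
Both double operator integrals on the right of \rf{intpre} are well defined bounded operators. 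This follows from the hypothesis that $\Phi_2(x,y)(x_2+\ri)$ and $\Phi_1(x,y)(x_1+\ri)$ belong to $\fM(\R^2\times\R^2)$, together with \rf{otnogr}. Here I use the duality definition of double operator integrals for arbitrary bounded $Q$ from \S\;\ref{dvoinya}.

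\emph{Step 1 (cut-off).} Fix bounded Borel sets $\L,\D\subset\R^2$ and multiply the first double operator integral by $E_B(\L)$ on the left and by $E_A(\D)$ on the right. This restricts the spectral measures to $\L$ and $\D$. I will use the standard multiplicativity of double operator integrals: if a Schur multiplier is multiplied by a bounded function of $x$ alone (respectively $y$ alone), this amounts to composing $Q$ on the right (respectively left) with the corresponding function of $A$ (respectively $B$). On $\D$ the function $x_2+\ri$ is bounded, and $(A_2+\ri I)^{-1}E_A(\D)$ is bounded. Applying this rule therefore gives
$$
E_B(\L)\Big(\iint\Phi_2(x,y)(x_2+\ri)\,\rd E_B\,(B_2-A_2)(A_2+\ri I)^{-1}\,\rd E_A\Big)E_A(\D)
=\iint_{\L\times\D}\Phi_2\,\rd E_B\,(B_2-A_2)E_A(\D)\,\rd E_A .
$$
Since $B_2E_B(\L)$ and $A_2E_A(\D)$ are bounded, we may write $E_B(\L)(B_2-A_2)E_A(\D)=B_2E_B(\L)\cdot E_A(\D)-E_B(\L)\cdot A_2E_A(\D)$. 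The same multiplicativity rule then turns this into the double operator integral over $\L\times\D$ of $\Phi_2(x,y)(y_2-x_2)$ with $Q=E_B(\L)E_A(\D)$. On the compact set $\L\times\D$ this integrand equals $f(x_1,y_2)-f(x_1,x_2)$, which is bounded because $f$ is continuous.

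\emph{Step 2 (telescoping).} Doing the same with the second term and adding, we obtain the double operator integral over $\L\times\D$ of $f(y)-f(x)$ applied to $Q=E_B(\L)E_A(\D)$. This equals $f(B)E_B(\L)E_A(\D)-E_B(\L)E_A(\D)f(A)$, that is, $E_B(\L)\big(f(B_1,B_2)-f(A_1,A_2)\big)E_A(\D)$ understood on the range of $E_A(\D)$. Note that the two pieces are added only after cut-off. The individual functions $f(x_1,y_2)$ need not give meaningful operators globally, but on $\L\times\D$ they are bounded continuous functions.

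\emph{Step 3 (removing cut-offs).} Let $\L_n,\D_n$ increase to $\R^2$. Fix $u\in E_A(\D_m)\mathcal H$ and $v\in E_B(\L_m)\mathcal H$; these form dense sets contained in the domains of $f(A)$ and $f(B)$ respectively. From Step 2 we get $\big((f(B)-f(A))u,v\big)=(Ru,v)$, where $R$ is the bounded right-hand side of \rf{intpre}. This identifies $f(B_1,B_2)-f(A_1,A_2)$, interpreted as a form on these cores, with the bounded operator $R$. I expect the main obstacle to be the rigorous justification of the multiplicativity rule in Step 1 when one factor, such as $(A_2+\ri I)^{-1}$ or $B_2E_B(\L)$, is inserted against an unbounded weight. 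I would handle it by first checking the identities for $Q$ of finite rank, or in $\bS_2$ where the Birman--Solomyak spectral-measure definition makes them immediate. I would then pass to general bounded $Q$ by weak-$*$ continuity of the transformers, which the Schur multiplier assumption provides by duality. I would also need to settle the precise sense in which the unbounded difference $f(B)-f(A)$ is meant, as a form on these cores or via a closure statement.
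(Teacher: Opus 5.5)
Your Steps~1--2 are sound and match the paper's route. You cut off by $E_B(\L)$ and $E_A(\D)$, pull out the now-bounded one-variable weights, and telescope $\Phi_2(y_2-x_2)+\Phi_1(y_1-x_1)=f(y)-f(x)$. The paper instead proves \rf{intpre} for bounded pairs and then truncates the operators themselves by spectral projections, but this is the same argument. Your worry about unbounded weights is unnecessary: $\Phi_2=\big[\Phi_2\cdot(x_2+\ri)\big](x_2+\ri)^{-1}$ is itself in $\fM(\R^2\times\R^2)$. After the cut-off every factor such as $(x_2+\ri)\chi_\D(x)$ or $y_2\chi_\L(y)$ is bounded, so only the standard multiplicativity rule is used.

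The gap is in Step~3. You never show that $f(B_1,B_2)-f(A_1,A_2)$ is bounded, so you end with a form identity on cores and leave ``the precise sense'' open. Even that identity, as written, uses $(f(B)u,v)$ for $u\in E_A(\D_m)\mathcal H$, and such $u$ need not lie in $\Dom f(B)$. The theorem asserts an identity of bounded operators, and the missing idea is that the hypotheses force $f$ to be bounded.

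Elements of $\fM(\R^2\times\R^2)$ are bounded functions everywhere. Hence $|f(x_1,y_2)-f(x_1,x_2)|\,|x_2+\ri|\le C|y_2-x_2|$ and $|f(y_1,y_2)-f(x_1,y_2)|\,|x_1+\ri|\le C|y_1-x_1|$, which are exactly \rf{triv01} and \rf{triv02}. Set $y_2=0$ in the first and $y_1=0$ in the second, and use $|t|\le|t+\ri|$. This gives $|f(t,s)-f(t,0)|\le C$ and $|f(t,s)-f(0,s)|\le C$, hence $|f(t,s)|\le 2C+|f(0,0)|$. This is Lemma~\ref{triv}.

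Consequently $f(A_1,A_2)$ and $f(B_1,B_2)$ are bounded operators. Your Step~2 then reads $E_B(\L_n)RE_A(\D_n)=E_B(\L_n)\big(f(B_1,B_2)-f(A_1,A_2)\big)E_A(\D_n)$. Both sides converge in the strong operator topology as $\L_n,\D_n\uparrow\R^2$, which yields \rf{intpre} directly, with no cores or closure statements needed.
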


%
%
%
%
%
%

First, we establish several auxiliary facts.
For a function $f$ on $\R^2$, we put
$$
f_t(s)\df f(t,s)\quad\mbox{and}\quad f^s(t)\df f(t,s).
$$

\begin{lem}
\label{triv}
Let $f$ be a function on $\R^2$.
Suppose that
\bay
\label{triv01}
|(f(x_1,y_2)-f(x_1,x_2))(x_2+\ri)|\le C|y_2-x_2|
\ey
and
\bay
\label{triv02}
|f(y_1,y_2)-f(x_1,y_2))(x_1+\ri)|\le C|y_1-x_1|
\ey
for all $x,y\in\R^2$.
Then $f$ is bounded.
\end{lem}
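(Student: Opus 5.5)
The plan is to bound $f$ coordinate by coordinate, using each hypothesis in its own variable. Concretely, I will show that $|f(t,s)|$ differs from $|f(0,0)|$ by a bounded amount, passing through the point $(t,0)$ or $(0,s)$.

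\textbf{Step 1: the second variable for fixed first variable.} Fix $x_1=t$ and use \rf{triv01}. It says that for all $x_2,y_2$,
$$
|f_t(y_2)-f_t(x_2)|\le\frac{C|y_2-x_2|}{|x_2+\ri|}.
$$
Take $x_2=s$ and $y_2=0$. This gives
$$
|f(t,s)-f(t,0)|\le\frac{C|s|}{(s^2+1)^{1/2}}\le C
$$
for all $t,s\in\R$. So along each vertical line, $f$ stays within $C$ of its value at height $0$.

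\textbf{Step 2: the first variable on the line $y_2=0$.} Use \rf{triv02} with $y_2=0$, $x_1=t$, $y_1=0$. This gives
$$
|f(0,0)-f(t,0)|\,|t+\ri|\le C|t|,
$$
hence $|f(t,0)-f(0,0)|\le C$ for every $t$.

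\textbf{Step 3: combine.} The triangle inequality now gives
$$
|f(t,s)|\le|f(0,0)|+2C
$$
for all $(t,s)\in\R^2$. Hence $f$ is bounded, as claimed.

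There is no real obstacle here. The only care needed is in choosing which point plays the role of $x$: the weight $(x_j+\ri)$ must sit at the moving point $t$ or $s$ and not at $0$, so that the factor $|t|/|t+\ri|\le1$ absorbs the growth. If the roles were reversed, the bound would only give linear growth.
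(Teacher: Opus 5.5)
Your proof is correct and is essentially the paper's argument. You get $|f(t,s)-f(t,0)|\le C$ from \rf{triv01} with the weight placed at the moving point, and $|f(t,0)-f(0,0)|\le C$ from \rf{triv02}; the paper proves this for every $s$ and uses only $s=0$. The triangle inequality then gives $|f(t,s)|\le 2C+|f(0,0)|$.
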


\Pf Condition \rf{triv01} can be rewritten as follows
$$
|f_t(s_1)-f_t(s_2)|\le C\frac{|s_1-s_2|}{\sqrt{s_1^2+1}}
$$
for all $t\in\R$ and $s\in\R^2$. Substituting $s_2=0$ we get  $|f(t,s)-f(t,0)|\le C$
for all $s,t\in\R$.

In the same way we have $|f(t,s)-f(0,s)|\le C$ for all $s,t\in\R$.
It remains to note that
$$
|f(t,s)-f(0,0)|\le|f(t,s)-f(t,0)|+|f(t,0)-f(0,0)|\le 2C
$$
and $|f(t,s)|\le 2C+|f(0,0)|$ for all $t,s\in\R$. $\bl$

\medskip

We also need the following assertion.

\begin{lem}
\label{triv1}
Let $f$ be a function on $\R^2$. Suppose that
\bay
\label{triv11}
|f(x_1,y_2)(y_2+\ri)-f(x_1,x_2)(x_2+\ri)|\le C|y_2-x_2|
\ey
and
\bay
\label{triv12}
|f(y_1,y_2)(y_1+\ri)-f(x_1,y_2)(x_1+\ri)|\le C|y_1-x_1|
\ey
for all $x,y\in\R^2$
Then $f$ is bounded.
\end{lem}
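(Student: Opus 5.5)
The plan is to argue as in the proof of Lemma~\ref{triv}, one variable at a time. The difference is that the hypotheses now say that the weighted sections are Lipschitz, rather than the plain sections being Lipschitz with a weight. Write $g(t,s)\df f(t,s)(s+\ri)$ and $h(t,s)\df f(t,s)(t+\ri)$. Then \rf{triv11} says that for every fixed $t$ the function $s\mapsto g(t,s)$ is Lipschitz on $\R$ with constant $C$. Likewise \rf{triv12} says that for every fixed $s$ the function $t\mapsto h(t,s)$ is Lipschitz with constant $C$. The idea is to use the linear growth of $|s+\ri|$ and $|t+\ri|$ to absorb the linear growth allowed by the Lipschitz bound.

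First I bound $f$ on the horizontal axis. Take $y_2=x_2=0$, $y_1=t$ and $x_1=0$ in \rf{triv12}. This gives $|f(t,0)(t+\ri)-f(0,0)\,\ri|\le C|t|$, hence
$$
|f(t,0)|\le\frac{C|t|+|f(0,0)|}{\sqrt{t^2+1}}\le C+|f(0,0)|,\quad t\in\R .
$$

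Next I fix $t$ and apply \rf{triv11} with $x_1=t$, $x_2=0$ and $y_2=s$. This gives $|f(t,s)(s+\ri)-f(t,0)\,\ri|\le C|s|$, so
$$
|f(t,s)|\le\frac{C|s|+|f(t,0)|}{\sqrt{s^2+1}}\le\frac{C|s|+C+|f(0,0)|}{\sqrt{s^2+1}}\le 2C+|f(0,0)|
$$
for all $t,s\in\R$. This is the required bound.

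There is no real obstacle here. The only point needing care is to use each hypothesis in the right direction. Condition \rf{triv12} is used to control $f$ along the line $s=0$, and condition \rf{triv11} is then used to move vertically from that line. At each step one divides by $|t+\ri|\ge\max(1,|t|)$ or by $|s+\ri|\ge\max(1,|s|)$, which turns the linearly growing right-hand sides into uniform bounds. $\bl$
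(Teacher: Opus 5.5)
Your proof is correct and is essentially the paper's argument with the two coordinates interchanged. The paper first uses \rf{triv11} to bound $f(0,\cdot)$ and then \rf{triv12} to move horizontally; you first use \rf{triv12} to bound $f(\cdot,0)$ and then \rf{triv11} to move vertically, which gives the explicit bound $2C+|f(0,0)|$.
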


\Pf Condition \rf{triv11} can be rewritten as follows
$$
|f_t(s_1)(s_1+\ri)-f_t(s_2)(s_2+\ri )|\le C|s_1-s_2|
$$
for all $t\in\R$ and $s\in\R^2$. Substituting $s_2=t=0$ we get \\
$|f_0(s)(s+\ri)-f(0,0)\ri |\le C|s|$
for all $s\in\R$. Hence, $f_0$ is a bonded function. Let $|f_0|\le C_1$.

Condition  \rf{triv12} can be rewritten as follows
$$
|f^s(t_1)(t_1+\ri)-f^s(t_2)(t_2+\ri)|\le C|t_1-t_2|
$$
for all $s\in\R$ and $t\in\R^2$. Hence,
$$
|f(t,s)(t+\ri)-f(0,s)\ri |\le C|t|
$$
for all $s,t\in\R$.

It remains to observe that
$$
|f(t,s)|\le\frac{C|t|+C_1}{|t+\ri|}.\quad\bl
$$

\medskip

{\bf Proof of Theorem \ref{osnfor}.} Assume first that the self-adjoint operators in question are bounded. Obviously, the divided differences 
$$
(x,y)\mapsto\frac{f(x_1,y_2)-f(x_1,x_2)}{y_2-x_2}\quad\mbox{and}\quad(x,y)\mapsto\quad\frac{f(y_1,y_2)-f(x_1,y_2)}{y_1-x_1}
$$
are Schur multipliers with respect to $E_A$ and $E_B$.

We have 
\begin{multline*}
\iint_{\R^2}\frac{f(x_1,y_2)-f(x_1,x_2)}{y_2-x_2}(x_2+\ri)
\,\rd E_B(y_1,y_2)(B_2-A_2)(A_2+\ri I)^{-1}\,\rd E_A(x_1,x_2)\\[.2cm]
=\iint_{\R^2}\frac{f(x_1,y_2)-f(x_1,x_2)}{y_2-x_2}
\,\rd E_B(y_1,y_2)(B_2-A_2)\,\rd E_A(x_1,x_2)
\end{multline*}
Clearly, the function 
$$
(x,y)\mapsto f(x_1,y_2)-f(x_1,x_2)
$$
must also be a Schur multipliers with respect to $E_A$ and $E_B$.

By Lemma \ref{triv}, the function $f$ must be bounded, and so the function
$(x,y)\mapsto f(x_1,x_2)$, being independent of $y$, must be in $\fM(\R^2\times\R^2)$
which in turn implies that the function $(x,y)\mapsto f(x_1,y_2)$ must be a Schur multipliers with respect to $E_A$ and $E_B$.

Thus,
\begin{multline*}
\iint_{\R^2}\frac{f(x_1,y_2)-f(x_1,x_2)}{y_2-x_2}
\,\rd E_B(y_1,y_2)(B_2-A_2)\,\rd E_A(x_1,x_2)\\[.2cm]
=\iint_{\R^2}(f(x_1,y_2)-f(x_1,x_2))
\,\rd E_B(y_1,y_2)\,\rd E_A(x_1,x_2)\\[.2cm]
=\iint_{\R^2}f(x_1,y_2)\,\rd E_B(y_1,y_2)\,\rd E_A(x_1,x_2)-f(A_1,A_2).
\end{multline*}

Similarly,
\begin{multline*}
\iint_{\R^2}\frac{f(y_1,y_2)-f(x_1,y_2)}{y_1-x_1}(x_1+\ri)
\,\rd E_B(y_1,y_2)(B_1-A_1)(A_1+\ri I)^{-1}\,\rd E_A(x_1,x_2)\\[.2cm]
=f(B_1,B_2)-\iint_{\R^2}f(x_1,y_2)\,\rd E_B(y_1,y_2)\,\rd E_A(x_1,x_2).
\end{multline*}

This proves formula \rf{intpre} in the case of bounded operators. The reduction to the case of possibly unbounded operators is more or less standard, see, e.g. the proof of Theorem 5.2 in \cite{APPS}.

Indeed, clearly, the double operator integrals on the right-hand side of \rf{intpre} determine bounded linear operators. Multiply them by the spectral projections 
$$
P_M\df E_B(\{(y_1,y_2):~|y_1|\le M~\;\mbox{and}~\;|y_2|\le M\}
$$
on the left and by the spectral projections
$$
Q_M\df E_B(\{(x_1,x_2):~|x_1|\le M~\;\mbox{and}~\;|x_2|\le M\}
$$
on the right. Obviously, the resulting sequence converges to the right-hand side of \rf{intpre} in the strong operator topology. It remains to observe that 
formula \rf{intpre} holds if we replace the pairs $(A_1,A_2)$ and $(B_1,B_2)$ with the pairs
$(Q_MA_1,Q_MA_2)$ and $(P_MB_1,P_MB_2)$ of bounded commuting self-adjoint operators
and proceed to the limit as $M\to\be$ in the strong operator tipology. $\bl$

\begin{lem}
\label{umnnas+i}
Let $f$ be a bounded function on $\R^2$ such that the function $g$ defined by
$$
g(s,t)=f(s,t)(s+i)
$$
belongs to the Besov class $B_{\be,1}^1(\R^2)$. Then the function
$$
(x,y)\mapsto\frac{f(y_1,y_2)(y_1+\ri)-f(x_1,y_2)(x_1+\ri)}{y_1-x_1}
$$
on $\R^2\times\R^2$ belongs to $\fM(\R^2\times\R^2)$,
where
$$
x=(x_1,x_2)\quad\mbox{and}\quad y=(y_1,y_2).
$$
\end{lem}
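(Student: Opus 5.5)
The function in question is exactly the divided difference of $g$ in the first variable, with the second variable frozen at $y_2$:
$$
\frac{f(y_1,y_2)(y_1+\ri)-f(x_1,y_2)(x_1+\ri)}{y_1-x_1}=\frac{g(y_1,y_2)-g(x_1,y_2)}{y_1-x_1}.
$$
So the plan is to show that for $g\in B^1_{\be,1}(\R^2)$ the function
$$
(x,y)\mapsto\frac{g(y_1,y_2)-g(x_1,y_2)}{y_1-x_1}
$$
admits a representation \rf{PhiHaa} with \rf{uslHaa}, where the $\f_n$ depend only on $x$ (indeed only on $x_1$) and the $\psi_n$ only on $y$. The Haagerup norm must be bounded by $\const\|g\|_{B^1_{\be,1}}$, uniformly in the spectral measures. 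This puts the function in $\fM(\R^2\times\R^2)$. Boundedness of $f$ is not used except to know that $g$ is a genuine function.

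\textbf{Step 1: Littlewood--Paley decomposition.} Write $g=\sum_{n\in\Z}g_n$ with $g_n=g*W_n$, up to a constant, which cancels in divided differences. Each $g_n$ is bounded with Fourier transform supported in the disc $\{\|\xi\|\le2^{n+1}\}$. The series of first-variable divided differences converges pointwise, since the partial derivatives $\partial g_n/\partial x_1$ are summable uniformly (cf.\ \rf{ryad} and $\sum 2^n\|g_n\|_{L^\be}<\be$ with Bernstein's inequality). It therefore suffices to prove
$$
\Big\|\frac{g_n(y_1,y_2)-g_n(x_1,y_2)}{y_1-x_1}\Big\|_{\fM}\le\const\,2^n\|g_n\|_{L^\be}
$$
and sum over $n$.

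\textbf{Step 2: the estimate for a single band, which is the main point.} For fixed $y_2$, the function $t\mapsto g_n(t,y_2)$ is bounded and of exponential type $\s=2^{n+1}$, as the restriction of a function with compactly supported Fourier transform. Its $L^\be$ norm is at most $\|g_n\|_{L^\be}$. I would use the standard one-variable expansion for entire functions $h$ of exponential type $\s$ bounded on $\R$, as in \cite{Pe}, \cite{AP1}:
$$
\frac{h(y_1)-h(x_1)}{y_1-x_1}=\sum_{k\in\Z}\frac{h(y_1)-h(\pi k/\s)}{y_1-\pi k/\s}\cdot\frac{\s\sin(\s x_1-\pi k)}{\s x_1-\pi k}.
$$
Here
$$
\sum_k\Big|\frac{\sin(\s x_1-\pi k)}{\s x_1-\pi k}\Big|^2\le\const \quad\text{and}\quad \sum_k\Big|\frac{h(y_1)-h(\pi k/\s)}{y_1-\pi k/\s}\Big|^2\le\const\,\s^2\|h\|_{L^\be}^2.
$$
Now apply this with $h=g_n(\cdot,y_2)$. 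The functions
$$
\f_k(x)=\frac{\sin(\s x_1-\pi k)}{\s x_1-\pi k}
$$
depend only on $x$. The functions
$$
\psi_k(y)=\s\,\frac{g_n(y_1,y_2)-g_n(\pi k/\s,y_2)}{y_1-\pi k/\s}
$$
depend only on $y=(y_1,y_2)$ and are Borel. The square-function bounds hold uniformly in $y_2$, because the constants depend only on $\s$ and $\sup_t|h(t)|\le\|g_n\|_{L^\be}$. This yields the Haagerup-type bound $\const\,2^n\|g_n\|_{L^\be}$. Such a representation gives a Schur multiplier with respect to arbitrary $E_A,E_B$ by the Birman--Solomyak implication quoted in \S\ref{dvoinya}.

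\textbf{The main obstacle} is verifying the one-variable interpolation identity and the square-function bound for the $\psi_k$. The identity holds because the divided difference in $x_1$ is itself of exponential type $\s$ and bounded, so it is recovered from its samples at $\pi k/\s$ via the cardinal series; convergence is justified by first treating $h(t)\,\sin(\varepsilon t)/(\varepsilon t)$ and letting $\varepsilon\to0$. For the bound I would use Bernstein's inequality and the estimate
$$
\Big|\frac{h(y_1)-h(\pi k/\s)}{y_1-\pi k/\s}\Big|\le\min\Big(\s\|h\|_\be,\frac{2\|h\|_\be}{|y_1-\pi k/\s|}\Big),
$$
whose squares sum to $\const\,\s^2\|h\|_\be^2$, and similarly for the sinc factors. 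Summing over $n$ then gives the claim with norm at most $\const\|g\|_{B^1_{\be,1}}$.
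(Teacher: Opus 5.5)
Your route is the paper's. The paper proves this lemma only by citing Theorem 6.2 of \cite{APPS}, and that result rests on the argument you reconstruct: Littlewood--Paley pieces $g_n$ of exponential type $2^{n+1}$, the cardinal-series expansion of the first-variable divided difference with $y_2$ frozen, two square-function bounds, and summation over $n$. Two points in Step 2 need repair, though.

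\textbf{The factor $\s$ in your expansion is spurious.} The correct identity is
\[
\frac{h(y_1)-h(x_1)}{y_1-x_1}=\sum_{k\in\Z}\frac{h(y_1)-h(\pi k/\s)}{y_1-\pi k/\s}\cdot\frac{\sin(\s x_1-\pi k)}{\s x_1-\pi k}.
\]
Evaluate at $x_1=\pi m/\s$: your version returns $\s$ times the left-hand side. So $\psi_k$ must be defined without the factor $\s$. With your $\psi_k$, the square-function bound becomes $\const\,\s^2\|g_n\|_{L^\be}$, i.e.\ $\const\,4^n\|g_n\|_{L^\be}$, which is not summable for $g\in B^1_{\be,1}$. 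With the correction you get the claimed $\const\,2^n\|g_n\|_{L^\be}$.

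\textbf{Your justification of the expansion does not work as stated.} A bounded function of exponential type $\s$ is not determined by its values at the nodes $\pi k/\s$; for example, $\sin\s x_1$ vanishes at all of them. Multiplying $h$ by $\sin(\varepsilon t)/(\varepsilon t)$ raises the type to $\s+\varepsilon$, which no longer matches the sampling step $\pi/\s$. What actually makes the identity hold is the following. For fixed $y_1$, the entire function $x_1\mapsto\big(h(y_1)-h(x_1)\big)/(y_1-x_1)$ has type at most $\s$. By your own estimate $\min\big(\s\|h\|_\be,\,2\|h\|_\be/|y_1-x_1|\big)$, it lies in $L^2(\R)$. Hence it belongs to the Paley--Wiener space and equals its Shannon series, uniformly on $\R$. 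No regularization is needed.

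A smaller point: since $\sum_{n<0}g_n$ may diverge, you should not start from $g=\sum_n g_n+\const$. Split the divided difference of $g$ into those of the $g_n$ via $\partial g/\partial x_1=\sum_n\partial g_n/\partial x_1$, which is what your pointwise-convergence argument in fact does.
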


\Pf The result can easily be obtained from Theorem 6.2 of \cite{APPS}. $\bl$

\begin{cor}
Under the hypotheses of Lemma {\em\ref{umnnas+i}} the function
$$
(x,y)\mapsto\frac{f(y_1,y_2)-f(x_1,y_2)}{y_1-x_1}(x_1+\ri)
$$
on $\R^2\times\R^2$ belongs to $\fM(\R^2\times\R^2)$.
\end{cor}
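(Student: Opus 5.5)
The plan is to write the function in the corollary as a difference of two functions, each a Borel Schur multiplier, and use that $\fM(\R^2\times\R^2)$ is a linear space. The key algebraic identity is
$$
\frac{f(y_1,y_2)-f(x_1,y_2)}{y_1-x_1}(x_1+\ri)
=\frac{f(y_1,y_2)(y_1+\ri)-f(x_1,y_2)(x_1+\ri)}{y_1-x_1}-f(y_1,y_2).
$$
It is verified by writing $f(y_1,y_2)(x_1+\ri)=f(y_1,y_2)(y_1+\ri)-f(y_1,y_2)(y_1-x_1)$ and dividing by $y_1-x_1$. On the diagonal set $x_1=y_1$ both divided differences are understood in the same way as in Lemma \ref{umnnas+i}, for instance through the derivative. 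With that convention the identity persists there, since $\partial_1\big(f(s,t)(s+\ri)\big)=\partial_1 f(s,t)\,(s+\ri)+f(s,t)$.

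By Lemma \ref{umnnas+i}, the first term on the right-hand side belongs to $\fM(\R^2\times\R^2)$.

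For the second term, the function $(x,y)\mapsto f(y_1,y_2)$ depends on $y$ only. Since $f$ is bounded by hypothesis, it is a Borel Schur multiplier. Concretely, it has the one-term representation \rf{PhiHaa} with $\f_0\equiv1$ and $\psi_0=f$. Condition \rf{uslHaa} holds trivially, and the Haagerup norm is at most $\|f\|_{L^\be}$ for any spectral measures. By the Birman--Solomyak implication recalled in \S\;\ref{dvoinya}, it is therefore a Schur multiplier with respect to arbitrary spectral measures on $\R^2$. Operator-theoretically, the double operator integral of this function is simply $Q\mapsto f(B_1,B_2)Q$.

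Subtracting the second term from the first gives the claim. The only point needing care is the convention on the diagonal $\{x_1=y_1\}$, which must be chosen consistently for both divided differences. If $f$ is not differentiable there, one can note instead that the diagonal convention affects only a set on which the identity is a pure definition of the left-hand side. So the left-hand side is defined there precisely so that the identity holds, and no real obstacle remains.
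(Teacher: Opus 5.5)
Your proposal is correct and follows essentially the same route as the paper: the same algebraic identity, Lemma \ref{umnnas+i} for the first term, and the observation that the bounded function $(x,y)\mapsto f(y_1,y_2)$, which depends only on $y$, lies in $\fM(\R^2\times\R^2)$. Your explicit one-term Haagerup representation and your check that the identity persists on the diagonal are consistent details that the paper leaves implicit. On the diagonal, since $g\in B_{\be,1}^1(\R^2)$ is continuously differentiable, $f=g/(s+\ri)$ is differentiable in the first variable, so the derivative convention is always available and no separate case is needed.
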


\Pf We have
$$
\frac{f(y_1,y_2)-f(x_1,y_2)}{y_1-x_1}(x_1+\ri)=\frac{f(y_1,y_2)(y_1+\ri)-f(x_1,y_2)(x_1+\ri)}{y_1-x_1}-f(y_1,y_2).
$$
Since the function 
$$
(x,y)\mapsto f(y_1,y_2)
$$
is bounded and depends only on $y$, it belongs to $\fM(\R^2\times\R^2)$. The result follows now from
Lemma \ref{umnnas+i}. $\bl$

\begin{cor}
Under the hypotheses of Lemma {\em\ref{umnnas+i}}
$$
\|f(B_1,B_2)-f(A_1,B_2)\|\le\const\|g\|_{B_{\be,1}^1}
\Big\|(B_1-A_1)(A_1+\ri I)^{-1}\Big\|.
$$
\end{cor}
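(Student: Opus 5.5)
The plan is to mimic the one-variable-at-a-time argument in the proof of Theorem \ref{osnfor}, but with only the first coordinate changing. We compare $f(B_1,B_2)$ with $f(A_1,B_2)$, where the second coordinate is kept equal to $B_2$. Note that $A_1$ and $B_2$ need not commute, so $f(A_1,B_2)$ has to be interpreted in a suitable way. I would interpret it exactly as the middle term appearing in the proof of Theorem \ref{osnfor}, namely
$$
f(A_1,B_2)\df\iint f(x_1,y_2)\,\rd E_B(y_1,y_2)\,\rd E_A(x_1,x_2).
$$
Here the function $(x,y)\mapsto f(x_1,y_2)$ is a Schur multiplier. Indeed, $f$ is bounded by hypothesis, and in this form it is a product of a function of $x_1$ and a function of $y_2$ only in a generalized sense. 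If needed, one could instead treat it as a double operator integral with respect to the spectral measures of $A_1$ and $B_2$, where it is literally a function of one variable each.

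The main step is the first-coordinate analog of the integral formula \rf{intpre}. That is, I would establish
$$
f(B_1,B_2)-f(A_1,B_2)=\iint\frac{f(y_1,y_2)-f(x_1,y_2)}{y_1-x_1}(x_1+\ri)\,\rd E_B(y)\,(B_1-A_1)(A_1+\ri I)^{-1}\,\rd E_A(x).
$$
The proof is exactly the second computation in the proof of Theorem \ref{osnfor}: for bounded operators one multiplies out $(B_1-A_1)$ against the divided difference, and then passes to the unbounded case by cutting with the spectral projections $P_M$ and $Q_M$ and taking strong limits. This formula requires the multiplier to lie in $\fM(\R^2\times\R^2)$, and that is exactly the conclusion of the preceding Corollary.

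Once the formula is in hand, the norm estimate is immediate from the boundedness of the transformer $Q\mapsto\iint\Phi\,\rd E_B\,Q\,\rd E_A$ in the operator norm, whose norm is $\|\Phi\|_{\fM}$. This gives
$$
\|f(B_1,B_2)-f(A_1,B_2)\|\le\Big\|\frac{f(y_1,y_2)-f(x_1,y_2)}{y_1-x_1}(x_1+\ri)\Big\|_{\fM}\Big\|(B_1-A_1)(A_1+\ri I)^{-1}\Big\|.
$$

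The remaining task is to bound the multiplier norm by $\const\|g\|_{B^1_{\be,1}}$. By the decomposition in the proof of the previous Corollary, this norm is at most the $\fM$-norm of the divided difference of $g$ in the first variable plus $\|f\|_{L^\be}$. The first term is bounded by $\const\|g\|_{B^1_{\be,1}}$ through the quantitative form of Theorem 6.2 of \cite{APPS}, which underlies Lemma \ref{umnnas+i}.

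The main obstacle is the term $\|f\|_{L^\be}$. We have $f(s,t)=g(s,t)/(s+\ri)$, and $g$ is only determined up to constants in $B^1_{\be,1}$. So I would argue that $|g(s,t)-g(s',t)|\le\const\|g\|_{B^1_{\be,1}}|s-s'|$, which holds because $B^1_{\be,1}$ functions are Lipschitz. Boundedness of $f$ then forces the normalization $g(-\ri\cdot)$ to vanish in the appropriate limit. As in Lemma \ref{triv1}, this should give $\|f\|_{L^\be}\le\const\|g\|_{B^1_{\be,1}}$, using $|g(s,t)|\le|g(s,t)-g(0,t)|+|g(0,t)|$ and controlling $g(0,t)$ by letting $s\to\pm\infty$. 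If this normalization argument turns out to be too delicate, I would instead absorb $\|f\|_{L^\be}$ into the constant, since $f$ is bounded by hypothesis.
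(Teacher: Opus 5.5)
Your overall route (the one-coordinate version of the identity in the proof of Theorem \ref{osnfor}, then the preceding Corollary, then the operator-norm bound for the transformer) is what the paper implicitly intends; the paper states this Corollary without proof. However, the final step fails, and it cannot be repaired as stated.

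The additive term $\|f\|_{L^\be}$ is not controlled by $\|g\|_{B^1_{\be,1}}$. The latter is a seminorm that vanishes on constants, while adding a constant to $g$ keeps $f=g/(s+\ri)$ bounded. So your claim that boundedness of $f$ ``forces the normalization'' of $g$ is false. Your own estimate $|g(s,t)|\le|g(s,t)-g(0,t)|+|g(0,t)|$ only gives $\|f\|_{L^\be}\le\const\|g\|_{B^1_{\be,1}}+\sup_t|g(0,t)|$, and the last term cannot be removed.

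In fact the claimed inequality itself is false. Take $g\equiv1$, i.e. $f(s,t)=(s+\ri)^{-1}$. Then $\|g\|_{B^1_{\be,1}}=0$, the multiplier of the preceding Corollary equals $-(y_1+\ri)^{-1}$ (of norm $1$), and
$$
f(B_1,B_2)-f(A_1,B_2)=(B_1+\ri I)^{-1}-(A_1+\ri I)^{-1}=-(B_1+\ri I)^{-1}(B_1-A_1)(A_1+\ri I)^{-1}.
$$
This is nonzero whenever $B_1\ne A_1$; for example, $A_1=A_2=B_2=0$, $B_1=I$ gives norm $1/\sqrt{2}$, whereas the right-hand side is $0$. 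For the same reason your fallback of ``absorbing $\|f\|_{L^\be}$ into the constant'' cannot work: an additive term cannot be absorbed into a multiplicative constant in front of a quantity that may vanish. What your argument actually proves is the corrected estimate with $\const\big(\|g\|_{B^1_{\be,1}}+\|f\|_{L^\be}\big)$ on the right-hand side, and that is what you should state.

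A secondary point: your reason why $(x,y)\mapsto f(x_1,y_2)$ lies in $\fM(\R^2\times\R^2)$ is not valid. Boundedness does not give the Schur multiplier property; the characteristic function of $\{x_1<y_2\}$ (triangular truncation) is a bounded non-multiplier. Viewing it as a function of $(x_1,y_2)$ with respect to the spectral measures of $A_1$ and $B_2$ just restates the question. In the proof of Theorem \ref{osnfor} this property came from boundedness of the spectra, which you no longer have.

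The property is nevertheless true. Let $\Psi$ denote the multiplier of the preceding Corollary. Then
$$
f(x_1,y_2)=f(0,y_2)+\frac{x_1}{x_1+\ri}\,\Psi\big((x_1,x_2),(0,y_2)\big).
$$
The function $(x,y)\mapsto\Psi\big(x,(0,y_2)\big)$ is still a Schur multiplier (push $E_B$ forward under $(y_1,y_2)\mapsto(0,y_2)$). Moreover, $x_1/(x_1+\ri)$ and $f(0,y_2)$ are bounded functions of $x$ alone and of $y$ alone, respectively. With $f(A_1,B_2)$ thus a well-defined bounded operator, your cutting argument with $P_M$ and $Q_M$ goes through.
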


\begin{thm}
\label{vterBes}
Let $(A_1,A_2)$ and $(B_1,B_2)$ be pairs of commuting self-adjoint operators satisfying
{\em\rf{otnogr}}.
Suppose that $f$ is a function on $\R^2$ such that the functions $g_1$ and $g_2$ defined by
$$
g_1(s,t)=f(s,t)(s+i)\quad\mbox{and}\quad g_2(s,t)=f(s,t)(t+i)
$$
belong to the Besov class $B_{\be,1}^1(\R^2)$. Then the functions
$$
(x,y)\mapsto\frac{f(y_1,y_2)-f(x_1,y_2)}{y_1-x_1}(x_1+\ri)
$$
and
$$
(x,y)\mapsto\frac{f(x_1,y_2)-f(x_1,x_2)}{y_2-x_2}(x_2+\ri)
$$
are Borel Schur multipliers on $\R^2\times\R^2$ and
\begin{multline*}
\big\|f(B_1,B_2)-f(A_1,A_2)\big\|\le\const\big(\|g_1\|_{B_{\be,1}^1}+\|g_2\|_{B_{\be,1}^1}\big)\\
\times\max\Big\{\big\|(B_1-A_1)(A_1+\ri I)^{-1}\big\|,\big\|(B_2-A_2)(A_2+\ri I)^{-1}\big\|\Big\}.
\end{multline*}
\end{thm}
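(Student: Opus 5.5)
The argument has three steps: show that each divided difference is a Borel Schur multiplier with a norm bound, apply the integral formula of Theorem \ref{osnfor}, and estimate the two double operator integrals it produces.

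\textbf{Step 1: $f$ is bounded.} Lemma \ref{umnnas+i} and its first corollary assume $f$ is bounded, so this must be checked first. Functions in $B_{\be,1}^1(\R^2)$ are Lipschitz, since their first partial derivatives are sums of uniformly convergent series bounded by $\const\|g\|_{B_{\be,1}^1}$. Hence
$$
|g_2(x_1,y_2)-g_2(x_1,x_2)|\le C|y_2-x_2|,\qquad |g_1(y_1,y_2)-g_1(x_1,y_2)|\le C|y_1-x_1|,
$$
which are exactly conditions \rf{triv11} and \rf{triv12}. Lemma \ref{triv1} then shows that $f$ is bounded.

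\textbf{Step 2: the first divided difference.} With $f$ bounded and $g_1\in B_{\be,1}^1(\R^2)$, the first corollary to Lemma \ref{umnnas+i} shows that
$$
(x,y)\mapsto\frac{f(y_1,y_2)-f(x_1,y_2)}{y_1-x_1}(x_1+\ri)
$$
belongs to $\fM(\R^2\times\R^2)$. Its multiplier norm is at most $\const\|g_1\|_{B_{\be,1}^1}+\|f\|_{L^\be}$.

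\textbf{Step 3: the second divided difference.} Apply the same corollary to $\tilde f(s,t)=f(t,s)$, for which $\tilde g(s,t)=\tilde f(s,t)(s+\ri)=g_2(t,s)$. The Besov norm is invariant under swapping the coordinates, because the Littlewood--Paley functions $W_n$ are radial. This gives that
$$
(x,y)\mapsto\frac{f(y_1,y_2)-f(y_1,x_2)}{y_2-x_2}(x_2+\ri)
$$
is a Borel Schur multiplier. The formula in Theorem \ref{osnfor}, however, has $x_1$ in place of $y_1$ as the frozen variable. To handle this I would use Lemma \ref{umnnas+i} in the form with a parameter: freezing a coordinate of either variable is harmless. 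Concretely, the proof of Theorem 6.2 in \cite{APPS} represents the divided difference in Haagerup form, with one factor depending on $x_2$ (plus a free parameter) and the other on $y_2$, uniformly in the frozen coordinate. One may therefore let the frozen coordinate be $x_1$, which is attached to the $x$-side factor. This bookkeeping of which side carries the frozen variable is the step I expect to be the main obstacle. If it causes trouble, the fallback is the decomposition
$$
f(B_1,B_2)-f(A_1,A_2)=\big(f(B_1,B_2)-f(A_1,B_2)\big)+\big(f(A_1,B_2)-f(A_1,A_2)\big).
$$
The first summand is controlled by the second corollary to Lemma \ref{umnnas+i}. The second summand is controlled by the symmetric version of that corollary, applied to the commuting pairs $(A_1,B_2)$ and $(A_1,A_2)$, which is legitimate only if $A_1$ and $B_2$ commute.

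\textbf{Step 4: the norm estimate.} Once both functions are Borel Schur multipliers, $f$ is continuous and bounded, so Theorem \ref{osnfor} applies. Each double operator integral in \rf{intpre} is bounded in norm by the $\fM$-norm of its multiplier times $\|(B_j-A_j)(A_j+\ri I)^{-1}\|$. It remains to bound $\|f\|_{L^\be}$ by $\const(\|g_1\|_{B_{\be,1}^1}+\|g_2\|_{B_{\be,1}^1})$. The constant in Lemma \ref{triv1} involves $f(0,0)$, so a quantitative refinement of that lemma is needed, or else one absorbs $|f(0,0)|=|g_1(0,0)|$, using that the Besov norm controls $g_1$ modulo constants. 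Summing the two terms gives the stated estimate with the maximum of the two relative bounds.
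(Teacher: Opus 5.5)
Your Steps 1, 2 and 4 follow the route the paper intends. The paper states this theorem without a separate proof, as a consequence of Theorem~\ref{osnfor}, the corollaries to Lemma~\ref{umnnas+i}, and Lemma~\ref{triv1} for boundedness. Step~3, however, has a genuine gap, and it is not a matter of bookkeeping.

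The pure divided difference $\frac{g_2(x_1,y_2)-g_2(x_1,x_2)}{y_2-x_2}$ is indeed in $\fM(\R^2\times\R^2)$. You do not even need a parameter argument: swap the coordinates and then transpose $x\leftrightarrow y$, which preserves the Haagerup tensor product. The trouble is passing to the multiplier you actually need:
\[
\frac{f(x_1,y_2)-f(x_1,x_2)}{y_2-x_2}(x_2+\ri)=\frac{g_2(x_1,y_2)-g_2(x_1,x_2)}{y_2-x_2}-f(x_1,y_2).
\]
In the corollary the correction term was $f(y_1,y_2)$, a bounded function of $y$ alone. Here it depends on both $x_1$ and $y_2$, so it is not automatically a Schur multiplier.

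No symmetry removes this term. In the first multiplier the frozen coordinate and the factor $x_1+\ri$ belong to different points; in the second they belong to the same point. Coordinate swaps and transposition both preserve this distinction. Moving the parameter in the Haagerup representation behind Lemma~\ref{umnnas+i} only affects the $g_2$-term.

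Your fallback does not escape the problem either. Apart from needing $A_1B_2=B_2A_1$, the intermediate operator ``$f(A_1,B_2)$'' for non-commuting $A_1,B_2$ can only mean $\iint f(x_1,y_2)\,\rd E_B(y)\,\rd E_A(x)$. That integral exists precisely when $(x,y)\mapsto f(x_1,y_2)$ is a Schur multiplier, which is the same missing fact.

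The missing idea is to handle this term with $g_1$. Compose the multiplier of Lemma~\ref{umnnas+i} (for $g_1$) with the Borel map $y\mapsto(0,y_2)$ on the $y$-side. This shows that $(x,y)\mapsto\frac{g_1(x_1,y_2)-g_1(0,y_2)}{x_1}$ belongs to $\fM(\R^2\times\R^2)$. Then
\[
f(x_1,y_2)=\frac{x_1}{x_1+\ri}\cdot\frac{g_1(x_1,y_2)-g_1(0,y_2)}{x_1}+\frac{\ri}{x_1+\ri}\,f(0,y_2),
\]
so $f(x_1,y_2)$ is a Borel Schur multiplier with norm at most $\const\|g_1\|_{B^1_{\infty,1}}+\|f\|_{L^\infty}$. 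This closes Step~3.

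There is also a smaller gap in Step~4. A seminorm that controls $g_1$ only modulo constants cannot bound $|g_1(0,0)|$. Use the coupling of $g_1$ and $g_2$ instead. Since $g_1(0,t)=\ri f(0,t)$, the identity
\[
f(0,0)=(1+\ri)\big(f(0,0)-f(0,1)\big)+\big(g_2(0,1)-g_2(0,0)\big)
\]
gives $|f(0,0)|\le(1+\sqrt2)L$, where $L\le\const(\|g_1\|_{B^1_{\infty,1}}+\|g_2\|_{B^1_{\infty,1}})$ is the Lipschitz bound from Step~1. Combined with Lemma~\ref{triv1}, this bounds $\|f\|_{L^\infty}$ as required.
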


Suppose now that $1\le p<\be$ and that $(A_1,A_2)$ and $(B_1,B_2)$ are pairs of commuting  self-adjoint operators such that 
$(B_j-A_j)$ is a relatively $\bS_p$ perturbation of $A_j$, $j=1,2$, i.e.
\bay
\label{otnSp}
(B_1-A_1)(A_1+\ri I)^{-1}\in\bS_p\quad\mbox{and}\quad(B_2-A_2)(A_2+\ri I)^{-1}\in\bS_p,
\ey
where $\bS_p$ denotes the Schatten--von Neumann class.

\begin{thm}
\label{otnSpvoz}
Let $1\le p<\be$ and let $(A_1,A_2)$ and $(B_1,B_2)$ be pairs of commuting self-adjoint operators satisfying {\em{\rf{otnSp}}}.
Suppose that $f$ is a function on $\R^2$ satisfying the hypotheses of Theorem {\em\ref{vterBes}}. Then
$f(B_1,B_2)-f(A_1,A_2)\in\bS_p$ and there exists a positice number $K$ such that
$$
\|f(B_1,B_2)-f(A_1,A_2)\|_{\bS_p}\le K\max\big\{\|B_1-A_1\|_{\bS_p},\|B_2-A_2\|_{\bS_p}\big\}.
$$
\end{thm}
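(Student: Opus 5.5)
The plan is to combine the integral formula of Theorem \ref{osnfor} with the Schur multiplier statement of Theorem \ref{vterBes}, and then use the fact that Borel Schur multipliers act boundedly on $\bS_p$. Note that the right-hand side of the estimate should be read with the relative perturbations $\|(B_j-A_j)(A_j+\ri I)^{-1}\|_{\bS_p}$, since those are what appear in \rf{otnSp}.

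\emph{Step 1: verify the hypotheses of Theorem \ref{osnfor}.} Hypothesis \rf{otnSp} implies \rf{otnogr}, because $\bS_p$ operators are bounded. The Besov hypotheses on $g_1,g_2$ imply that $f$ is continuous, since functions in $B_{\be,1}^1(\R^2)$ are continuous and $f=g_1/(s+\ri)$. Theorem \ref{vterBes} then shows that both divided-difference functions
$$
(x,y)\mapsto\frac{f(y_1,y_2)-f(x_1,y_2)}{y_1-x_1}(x_1+\ri),\qquad (x,y)\mapsto\frac{f(x_1,y_2)-f(x_1,x_2)}{y_2-x_2}(x_2+\ri)
$$
are Borel Schur multipliers. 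Hence formula \rf{intpre} holds. It expresses $f(B_1,B_2)-f(A_1,A_2)$ as a sum of two double operator integrals with respect to $E_B$ and $E_A$, applied to $Q_2=(B_2-A_2)(A_2+\ri I)^{-1}$ and $Q_1=(B_1-A_1)(A_1+\ri I)^{-1}$.

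\emph{Step 2: $\bS_p$-boundedness of the transformers.} Let $\Phi\in\fM_{E_B,E_A}$. By the Haagerup tensor product description, the transformer $Q\mapsto\iint\Phi\,\rd E_BQ\,\rd E_A$ is bounded on $\bS_1$. By the duality definition, the same holds with the roles of the two measures swapped, so it is also bounded on the whole operator space $\mathcal B(\mathcal H)$. Interpolation between $\bS_1$ and $\mathcal B(\mathcal H)$, both of norm at most $\|\Phi\|_{\fM}$, gives boundedness on $\bS_p$ for $1\le p<\be$ with the same bound.

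A more direct route avoids interpolation. Write $\Phi=\sum\f_n(y)\psi_n(x)$ as in \rf{PhiHaa}. Then the transformer is $\sum_n\f_n(B)Q\psi_n(A)$. The row operator built from $\{\f_n(B)\}$ and the column operator built from $\{\psi_n(A)\}$ are bounded, with norms controlled by \rf{uslHaa}. Since $\bS_p$ is an ideal, the transformer maps $\bS_p$ into $\bS_p$ with norm at most $\|\Phi\|_{L^\be\otimes_{\rm h}L^\be}$.

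\emph{Step 3: combine.} From Steps 1 and 2,
$$
\|f(B_1,B_2)-f(A_1,A_2)\|_{\bS_p}\le \|\Phi_1\|_{\fM}\|Q_1\|_{\bS_p}+\|\Phi_2\|_{\fM}\|Q_2\|_{\bS_p},
$$
where $\Phi_1,\Phi_2$ are the two multipliers above. Theorem \ref{vterBes}, via Lemma \ref{umnnas+i} and its corollary, bounds their multiplier norms by $\const(\|g_1\|_{B^1_{\be,1}}+\|g_2\|_{B^1_{\be,1}})$. This yields the claim with $K=\const(\|g_1\|_{B^1_{\be,1}}+\|g_2\|_{B^1_{\be,1}})$.

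\emph{Main obstacle.} The delicate point is the meaning of formula \rf{intpre} in the unbounded case. It was established there through strong-operator limits of truncations by $P_M,Q_M$, so we must check that membership in $\bS_p$ survives this limit. I would handle it by applying Step 2 directly to the limiting double operator integrals. These are well defined for every bounded $Q$, and for $Q\in\bS_p$ they lie in $\bS_p$. Since \rf{intpre} is an identity of bounded operators, the bound passes to the left-hand side without any further limiting argument.
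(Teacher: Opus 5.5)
Your proof is correct and follows the paper's own route; the paper's entire proof is the remark that the result follows from Theorems \ref{osnfor} and \ref{vterBes} by interpolation. Your reading of the right-hand side through $\|(B_j-A_j)(A_j+\ri I)^{-1}\|_{\bS_p}$ is the right one, and it even implies the bound as literally stated, because $\|(A_j+\ri I)^{-1}\|\le1$.

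Discard the ``more direct route'' in Step 2, though. You have $\sum_n\f_n(B)Q\psi_n(A)=R\,(Q\oplus Q\oplus\cdots)\,C$ with bounded row $R$ and column $C$, but $Q\oplus Q\oplus\cdots\notin\bS_p$ unless $Q=\0$. So the ideal property yields nothing there, and the interpolation between $\bS_1$ and the bounded operators (or an equivalent convexity argument) is genuinely needed.
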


The theorem can be easily deduced from Theorems \ref{osnfor} and \ref{vterBes} with the help of interpolation.

\

\

\begin{footnotesize}

\
 
\noindent
\begin{tabular}{p{8cm}p{15cm}}
A.B. Aleksandrov & V.V. Peller \\
St.Petersburg State University & St.Petersburg State University \\
Universitetskaya nab., 7/9  & Universitetskaya nab., 7/9\\
199034 St.Petersburg, Russia & 199034 St.Petersburg, Russia \\
\\

St.Petersburg Department &St.Petersburg Department\\
Steklov Institute of Mathematics  &Steklov Institute of Mathematics  \\
Russian Academy of Sciences  & Russian Academy of Sciences \\
Fontanka 27, 191023 St.Petersburg &Fontanka 27, 191023 St.Petersburg\\
Russia&Russia\\
email: alex@pdmi.ras.ru&email: pellerv@gmail.com
\end{tabular}
\end{footnotesize}

\end{document}